\theoremstyle{plain}
\newtheorem{theorem}{Theorem}[section]
\newaliascnt{lem}{theorem}
\newtheorem{lem}[lem]{Lemma}
\newaliascnt{cor}{theorem}
\newaliascnt{prop}{theorem}
\theoremstyle{remark}
\newtheorem{rem}{Remark}[section]
\theoremstyle{definition}
\numberwithin{equation}{section}
\newcommand{\norm}[1]{\left\lVert#1\right\rVert}
\newcommand{\abs}[1]{\left\lvert#1\right\rvert}
\newcommand{\set}[1]{\left\{#1\right\}}
\newcommand{\hin}[2]{\left\langle#1,#2\right\rangle}
\newcommand{\rin}[2]{\left(#1,#2\right)}
\newcommand{\field}[1]{\mathbb{#1}}
\newcommand{\R}{\field{R}}
\newcommand{\Com}{\field{C}}
\newcommand{\Quat}{\field{H}}
\newcommand{\Cl}{\mathbb{C}l}
\newcommand{\To}{\longrightarrow}
\newcommand{\Rmn}[1]{\uppercase\expandafter{\romannueral#1}}
\DeclareMathOperator{\vol}{vol}
\DeclareMathOperator{\area}{area}
\DeclareMathOperator{\Id}{Id}
\DeclareMathOperator{\End}{End}
\journal{arXiv}
\begin{document}

\begin{frontmatter}

\title{Extrinsic eigenvalue estimates for Dirac operator\tnoteref{CS}}

\author[whu]{Qun Chen}
\ead{qunchen@whu.edu.cn}

\author[whu]{Linlin Sun\corref{sll}}
\ead{sunll@whu.edu.cn}

\tnotetext[CS]{This research is partially supported by the National Natural Science Foundation of China (Grant Nos. 11571259, 11801420)  and Fundamental Research Funds for the Central Universities (Grant No. 2042018kf0044).}

\address[whu]{School of Mathematics and Statistics \& Computational Science Hubei Key Laboratory, Wuhan University, 430072 Wuhan, China}

\cortext[sll]{Corresponding author.}

\begin{abstract}
In this note, we prove lower and upper bounds for Dirac operators of submanifolds in certain ambient manifolds in terms of conformal and extrinsic  quantities.

\end{abstract}

\begin{keyword}
spectral theory\sep Dirac operator\sep geometry of submanifolds

 \MSC[2010] 53C27\sep 53C40

\end{keyword}

\end{frontmatter}

\section{Introduction}

The eigenvalues of Dirac operators on spin manifolds are extensively studied. In 1980, Friedrich \cite{Fri80} first derived the  lower bound of the first eigenvalues of a Dirac operator $D$ in terms of the scalar curvature $S_M$ and dimension $m$ of the underling manifold $M^m$:
 \begin{equation*}
\lambda^2\left(D\right)\geq\dfrac{m}{4(m-1)}S_M.
\end{equation*}
Since then, various kinds of estimates in terms of  intrinsic geometric
quantities have been proved (see e.g. \cite{Fri00, Gin09} and the references therein). A well known result  of Hijazi \cite{Hij86} states that
\begin{equation*}
\lambda^2\left(D\right)\geq\dfrac{m}{4(m-1)}\lambda_1(L_M)
\end{equation*}
for $m\geq3$, where $L_M=-\frac{4(m-1)}{m-2}\Delta+S_M$ is the Yamabe operator of $M$. If $m=2$, B\"ar \cite{Bar92} proved that
\begin{equation*}
\lambda^2\left(D\right)\geq\dfrac{4\pi(1-g_M)}{\area(M)},
\end{equation*}
where $g_M$ is the genus of $M$.

\par
On the other hand, the submanifold theory for Dirac operators was introduced by B\"ar in \cite{Bar98}.
Let $M^m\stackrel{\iota}{\hookrightarrow}\bar M^{m+n}$ be a closed oriented connected spin submanifold isometrically immersed in a Riemannian spin manifold $\bar M^{m+n}$ with fixed spin structures.  Milnor's Lemma claims that there is a unique spin structure \cite{LawMic89} on the normal bundle $N$ of $M$ in $\bar M$.  Denoted by $\bar\nabla, \nabla$ and $\nabla^{\perp}$ the Levi-Civita connections on $\bar M, M, N$ respectively. Denoted by $\nabla^{\Sigma\bar M}, \nabla^{\Sigma M}$ and $\nabla^{\Sigma N}$ the Levi-Civita connections on $\Sigma\bar M, \Sigma M$ and $\Sigma N$ respectively. For every $X, Y\in TM$, define
\begin{align*}
\bar R(X,Y)\coloneqq&[\bar\nabla_X,\bar\nabla_Y]-\bar\nabla_{[X,Y]},\\
R(X,Y)\coloneqq&[\nabla_X,\nabla_Y]-\nabla_{[X,Y]},\\
R^{\perp}(X,Y)\coloneqq&[\nabla^{\perp}_X,\nabla^{\perp}_Y]-\nabla^{\perp}_{[X,Y]},\\
R^{\Sigma\bar M}(X,Y)\coloneqq&[\nabla^{\Sigma\bar M}_X,\nabla^{\Sigma\bar M}_Y]-\nabla^{\Sigma\bar M}_{[X,Y]},\\
R^{\Sigma M}(X,Y)\coloneqq&[\nabla^{\Sigma M}_X,\nabla^{\Sigma M}_Y]-\nabla^{\Sigma M}_{[X,Y]},\\
R^{\Sigma N}(X,Y)\coloneqq&[\nabla^{\Sigma N}_X,\nabla^{\Sigma N}_Y]-\nabla^{\Sigma N}_{[X,Y]}.
\end{align*}
Denoted by $\bar\gamma, \gamma, \gamma^{\perp}$ the Clifford multiplications on  $\Sigma\bar M, \Sigma M$ and $\Sigma N$ respectively. Denoted by $\bar D, D, D^{\perp}$ the Dirac operators on  $\Sigma\bar M, \Sigma M$ and $\Sigma N$ respectively. Let $A^{\mu}$ be the shape operator of $M$ in $\bar M$ with respect to the normal vector field $\mu$, $B$ be the second fundamental form of $M$ in $\bar M$ and $H$ be the normalized mean curvature vector of $M$ in $\bar M$. If $M$ is a hypersurface of $\bar M$, we denote $A$ be the shape operator of $M$ in $\bar M$ with respect to the unit outward normal vector field.  Finally, denote $R(\iota)$ be the normalized trace of the ambient sectional curvature on the tangent space, i.e.,
\begin{align*}
R(\iota)=\dfrac{1}{m(m-1)}\sum_{i,j=1}^m\bar R(e_i,e_j,e_i,e_j),
\end{align*}
where $\set{e_i}$ is a local orthonormal frame of $TM$.

B\"ar \cite{Bar98} derived upper eigenvalue estimates for Dirac operators of closed hypersurfaces in real space forms. According to \cite{Bar98}, we know that
\begin{equation*}
\Sigma\bar M\vert_{\partial M}=
\begin{cases}
\Sigma M\otimes\Sigma N,&mn=0 \mod 2\\
\left(\Sigma M\otimes\Sigma N\right)\oplus\left(\Sigma M\otimes\Sigma N\right), &mn=1\mod2.
\end{cases}
\end{equation*}
By $D^{\Sigma N}$ we mean the Dirac operator on $M$ twisted with the bundle $\Sigma N$.

\par
A spinor $\psi$ on $\bar M$ is called a Killing spinor with Killing constant $\alpha\in\Com$ if
\begin{equation*}
\nabla_X^{\Sigma\bar M}\psi+\alpha \bar\gamma(X)\psi=0.
\end{equation*}
B\"ar \cite{Bar98} proved that if $\bar M$ admits a nontrivial Killing spinor with constant $\alpha\in\R$, then the first eigenvalue $\lambda_1\left(D^{\Sigma N}\right)$ of $D^{\Sigma N}$ (in the sense all other eigenvalue $\lambda$ of $D^{\Sigma N}$ satisfying $\abs{\lambda}\geq\abs{\lambda_1}$ ) satisfies the following estimate
\begin{equation*}
\lambda_1^2\left(D^{\Sigma N}\right)\leq m^2\abs{\alpha}^2+\dfrac{m^2}{4\vol(M)}\int_M\abs{H}^2.
\end{equation*}
 If $\alpha\in\sqrt{-1}\R$, he obtained the following estimate
\begin{equation*}
\abs{\lambda_1\left(D^{\Sigma N}\right)}\leq m\left(\abs{\alpha}+\dfrac12\norm{H}_{L^{\infty}(M)}\right).
\end{equation*}
Especially, if $\bar M$ is the Euclidean space $\R^{m+n}$, then
\begin{equation*}
\lambda_1^2(D^{\Sigma N})\leq\dfrac{m^2}{4\vol(M)}\int_M\abs{H}^2.
\end{equation*}
If $\bar M$ is the unit sphere $\field{S}^{m+n}(1)$, then
\begin{equation*}
\lambda_1^2\left(D^{\Sigma N}\right)\leq\dfrac{m^2}{4\vol(M)}\int_M\left(\abs{H}^2+1\right).
\end{equation*}
Finally, if $\bar M$ is the hyperbolic space $\Quat^{m+n}(-1)$, then
\begin{equation*}
\abs{\lambda_1\left(D^{\Sigma N}\right)}\leq \dfrac{m}{2}\left(1+\norm{H}_{L^{\infty}(M)}\right).
\end{equation*}
When $\bar M$ is the hyperbolic space $\Quat^{m+1}$, the result has been improved by Ginoux (cf. \cite{Gin03}). It was proved that
\begin{equation*}
\abs{\lambda_1\left(D^{\Sigma N}\right)}\leq \dfrac{m}{2}\left(\norm{H}_{L^{\infty}(M)}-1\right).
\end{equation*}
\par
For  hypersurface $M^m$ in $\bar M^{m+1}$, given a spinor $\psi$ on $\bar M^{m+1}$ with no zero on the hypersurface $M$,  Ginoux, Habib and Raulot introduced in \cite{GinHabRau15}  a differential operator $L_{\psi}$ acting on smooth functions on $M$ by
\begin{equation*}
L_{\psi}f\coloneqq-\Delta f-2\hin{\nabla\ln\abs{\psi}}{\nabla f}+\dfrac{m^2}{4}\left(\abs{H}^2+R(\iota)\right)f,\quad f\in C^{\infty}(M),
\end{equation*}
where $R(\iota)\coloneqq\frac{1}{m(m-1)}\left(\bar S-2\bar Ric(\nu,\nu)\right)$, $\bar S,\bar Ric$ and $\nu$ are the scalar curvature, the Ricci curvature of $\bar M$ and the unit outward norm vector field of $M$ in $\bar M$ respectively.   It was proved in \cite{GinHabRau15} that if $\bar M$ admits a nontrivial twistor-spinor $\psi$ with no zero on $M$, then
\begin{equation*}
\lambda^2_1\left(D\right)\leq\lambda_1(L_{\psi}).
\end{equation*}
Notice that if $\psi$ is a Killing spinor, then
\begin{equation*}
L_{\psi}=-\Delta +\dfrac{m^2}{4}\left(\abs{H}^2+R(\iota)\right)=-\Delta +\dfrac{m}{4(m-1)}\left(S_M+\abs{\mathring{A}}^2\right)
\end{equation*}
which is independent of $\psi$. Here  $\mathring{A}$ is the traceless part of $A$.
\par

For lower bounds estimates of submanifold Dirac operators, Hijazi and Zhang in \cite{HijZha01, HijZha01a} proved that  for $D_H\varphi=\lambda_H\varphi$, $\varphi\in \Gamma(\Sigma\bar{M})|_M$, it holds:
\begin{equation*}
\lambda_H^2\geq \frac 1 4 \sup\limits_a\inf\limits_{M_\varphi} \left(\frac{S_M+ R_{\perp,\varphi}}{1+ma^2-2a}-\frac{(m-1)m^2\abs{H}^2}{(1-ma)^2}    \right),
\end{equation*}
where $a$ is some real function on $M$,  $M_\varphi=\{x\in M|\varphi(x)\not=0\}$, and
\begin{align*}
R_{\perp,\varphi}=-\dfrac 1 2 \left(\sum\limits_{i,j,\alpha,\beta}\bar{R}_{ij\alpha\beta}e^i\cdot e^j\cdot e^\alpha\cdot e^\beta\cdot\varphi,\frac{\varphi}{|\varphi|^2}\right).
\end{align*}
Under some extra conditions on the extrinsic curvature, they also obtained some lower bound
in terms of the Yamabe constant, the curvature and volume of $M$ (see \cite{HijZha01a} for details).

\vskip6pt

\par
In this paper,   we will prove   lower and upper bound estimates for submanifold Dirac operators in terms of conformal and extrinsic  quantities. Firstly, we have the following lower bound estimate:

\begin{theorem}\label{thm:main-lower}
Let $M^m$ be a closed oriented submanifold isometrically immersed in a Riemannian spin manifold $\bar M^{n+m}$. Suppose $n=1$ or $\bar M$ is locally conformally flat. Then the  eigenvalue $\lambda$ of the Dirac operator $D^{\Sigma N}$ of the twisted bundle $\Sigma M\otimes\Sigma N$ satisfies
\begin{equation*}
\lambda^2\geq
\begin{cases}
\dfrac{4\pi(1-g_M)}{\area(M)}-\dfrac{(n-1)\int_M\abs{\mathring{A}}^2}{2\area(M)},&m=2,\\
\dfrac{m}{4(m-1)}\lambda_1(L),&m>2.
\end{cases}
\end{equation*}
Here  $\lambda_1(L)$ (if $m>2$) is the first eigenvalue of the operator $L$ defined by
\begin{equation*}
L=-\dfrac{4(m-1)}{m-2}\Delta +S_M-(n-1)\abs{\mathring{A}}^2.
\end{equation*}
Moreover, the equality implies that the Ricci curvature of $M$ satisfies
\begin{equation*}
Ric=(n-1)\sum_{\alpha=1}^n\left(\mathring{A}^{\alpha}\right)^2+\dfrac{4(m-1)\lambda^2}{m^2}g.
\end{equation*}
\end{theorem}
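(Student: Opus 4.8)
The plan is to reduce the extrinsic eigenvalue estimate for $D^{\Sigma N}$ to the classical intrinsic estimates of Hijazi (for $m>2$) and Bär (for $m=2$) by choosing a favourable test spinor and exploiting the hypothesis that $\bar M$ is locally conformally flat (or $n=1$). The key observation is that under this hypothesis the twisted bundle $\Sigma M\otimes\Sigma N$, together with the Dirac operator $D^{\Sigma N}$, admits a Weitzenböck-type formula whose curvature term is $\tfrac14 S_M$ corrected by an extrinsic term coming from the Gauss equation. Concretely, I would first write the Schrödinger--Lichnerowicz formula for $D^{\Sigma N}$ on the twisted bundle,
\begin{equation*}
\left(D^{\Sigma N}\right)^2 = \left(\nabla^{\Sigma M\otimes\Sigma N}\right)^*\nabla^{\Sigma M\otimes\Sigma N} + \dfrac14 S_M + \mathcal{R}^{\Sigma N},
\end{equation*}
where $\mathcal{R}^{\Sigma N}$ is the curvature endomorphism of the twisting bundle $\Sigma N$; the central point is that when $\bar M$ is locally conformally flat (or $n=1$, so $N$ is a line bundle with flat induced connection after the Milnor trivialization), the Ricci equation forces $R^{\perp}$, hence $R^{\Sigma N}$, to be expressible purely through $B$ (in fact through $\mathring A$), and a trace computation identifies $\mathcal{R}^{\Sigma N}$ with an endomorphism bounded below by $-\tfrac14(n-1)\abs{\mathring A}^2$.

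Next I would run Hijazi's conformal argument in this twisted setting: perform a conformal change $\bar g = e^{2u} g$ on $M$, use the conformal covariance of the Dirac operator (which persists for the twisted operator $D^{\Sigma N}$ since the twisting bundle and its connection are conformally invariant), and choose $u$ so that the conformal metric realizes the infimum in the Rayleigh quotient for $L$. This turns the pointwise bound $\bigl(D^{\Sigma N}\bigr)^2 \geq \tfrac14\bigl(S_M - (n-1)\abs{\mathring A}^2\bigr)$ in the conformal class into the Yamabe-type statement $\lambda^2 \geq \tfrac{m}{4(m-1)}\lambda_1(L)$, exactly as in Hijazi's original proof with $S_M$ replaced by $S_M - (n-1)\abs{\mathring A}^2$. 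For $m=2$ the conformal argument is replaced by Bär's: on a surface the relevant operator is $-\Delta$ plus a curvature term, and integrating the Weitzenböck formula against a harmonic-type spinor together with Gauss--Bonnet, $\int_M S_M = 8\pi(1-g_M)$, yields the stated bound with the $-\tfrac{(n-1)}{2\area(M)}\int_M\abs{\mathring A}^2$ correction.

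Finally, for the equality case I would trace back through the inequalities. Equality in the Weitzenböck step forces the test spinor to be a \emph{twistor-spinor} (the Penrose operator part vanishes) and simultaneously an eigenspinor, hence a Killing-type spinor for the twisted connection; equality in the curvature estimate forces the endomorphism $\mathcal{R}^{\Sigma N}$ to attain its lower bound $-\tfrac14(n-1)\abs{\mathring A}^2$ on the spinor. Combining the twistor equation with the eigenvalue equation and the Weitzenböck identity, the Ricci endomorphism acting on the spinor is pinned down, and since the spinor is nowhere zero (from the twistor/Killing structure) this pins down $Ric$ itself, giving
\begin{equation*}
Ric = (n-1)\sum_{\alpha=1}^n\left(\mathring A^{\alpha}\right)^2 + \dfrac{4(m-1)\lambda^2}{m^2}\, g.
\end{equation*}
The main obstacle I expect is the second step in disguise: correctly computing the twisting curvature term $\mathcal{R}^{\Sigma N}$ and showing that the locally-conformally-flat hypothesis is exactly what makes its contribution reduce to $-\tfrac14(n-1)\abs{\mathring A}^2$ (the $\mathring A$ rather than $A$ appearing because the $\trace A$ part is absorbed by the mean curvature, which here does not enter the lower bound) — this requires carefully combining the Gauss, Codazzi and Ricci equations for the immersion and keeping track of the sign conventions in the Clifford action of curvature $2$-forms on $\Sigma N$.
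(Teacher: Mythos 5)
Your proposal follows essentially the same route as the paper: a Lichnerowicz--Weitzenb\"ock formula for $D^{\Sigma N}$, the key pointwise bound of the normal-bundle curvature term by $-\tfrac{n-1}{4}\abs{\mathring{A}}^2$ (the ambient mixed-curvature contribution being killed by $n=1$ or local conformal flatness via the Ricci equation), Hijazi's conformal/weighted argument for $m>2$ and B\"ar's surface argument with Gauss--Bonnet for $m=2$, and an equality analysis through the resulting twistor--Killing spinor. The ``main obstacle'' you flag is precisely the paper's Lemma~\ref{lem:curvature} together with inequality \eqref{eq:3.1}, which is obtained exactly as you predict (Gauss--Codazzi--Ricci plus a Cauchy--Schwarz/completion-of-squares step in the Clifford algebra), so the proposal is correct in approach and detail.
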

\begin{rem}
\begin{itemize}
\item When $m=2$,
\begin{equation*}
\int_M\abs{\mathring{A}}^2
\end{equation*}
is invariant under the conformal change of the metric $\bar g$. The equality implies that $g_M=0$  or $g_M=1$ and $\mathring{A}=0$, i.e., $M$ is a $2$-sphere or a totally umbilici $2$-torus.
\item If $m>2$, the operator is conformally invariant in the following sense. If $\bar g'=u^{4/(m-2)}\bar g$ is a metric conformal to $\bar g$, and $L'$ is similarly defined with respect to the metric $\bar g'$, then
\begin{equation*}
L'(u^{-1}f)=u^{-(m+2)/(m-2)}Lf.
\end{equation*}
\item
If $m=n=2$, then the first nonzero eigenvalue $\lambda$ of $D^{\Sigma N}$ satisfies
\begin{equation*}
\lambda^2\geq\dfrac{4\pi(1-g_M)+2\pi\abs{\chi(N)}}{\area(M)}.
\end{equation*}
\end{itemize}
\end{rem}

For a Dirac operator $D$, let $\lambda_i$ be the eigenvalues. We recall the conformal eigenvalue  $\sigma_i(D)$ of $D$ (cf. \cite{Amm03}) given by
\begin{equation*}
\sigma_i(D)=\inf_{\tilde g\in[g]}\abs{\lambda_i(\tilde g)}\vol_{M_{\tilde g}}^{1/m}.
\end{equation*}
Here $[g]$ stands for the conformal class of $g$.
Similarly, for a second positive self adjoint elliptic operator $L$, we have the conformal eigenvalue  $\lambda_i(L)$ of $L$ by
\begin{equation*}
\sigma_i(L)=\inf_{\tilde g\in[g]}\lambda_i(\tilde g)\vol_{M_{\tilde g}}^{2/m}.
\end{equation*}
Now \autoref{thm:main-lower} implies that
\begin{equation*}
\sigma_1^2\left(D^{\Sigma N}\right)\geq
\begin{cases}
4\pi(1-g_M)-\dfrac{n-1}{2}\int_M\abs{\mathring{A}}^2,&m=2,\\
\dfrac{m}{4(m-1)}\sigma_1(L),& m>2.
\end{cases}
\end{equation*}
\par

We say that $\psi$ is a twistor spinor on $\bar M$ if
\begin{equation*}
\bar\nabla_X^{\Sigma M}\psi+\dfrac{1}{m+n}\bar\gamma(X)\bar D\psi=0,\quad\forall X\in T\bar M.
\end{equation*}
By definition, we know that each Killing spinor is a twistor spinor.
For the upper bound of the Dirac operator $D^{\Sigma N}$, we will prove the following
\begin{theorem}\label{thm:main1}
Let $M, \bar M$ be  as in \autoref{thm:main-lower}. Suppose $\bar M$ admits a nontrivial twistor spinor, then there are at least $\mu$ conformal eigenvalues $\sigma_i$ of the Dirac operator $D^{\Sigma N}$ of the twisted bundle $\Sigma M\otimes\Sigma N$  such that
\begin{itemize}
\item If $m=2$,
\begin{equation*}
\sigma_i^2\leq4\pi(1-g_M)+\dfrac12\int_{M}\abs{\mathring{A}}^2.
\end{equation*}
\item If $m\geq3$,
\begin{equation*}
\sigma_i^2\leq\dfrac{m}{4(m-1)}\sigma_1\left(L_M+\abs{\mathring{A}}^2\right)=\dfrac{m}{4(m-1)}\inf_{\phi>0}\dfrac{\int_M\phi\left(L_M
+\abs{\mathring{A}}^2\right)\phi}{\left(\int_M\phi^{2m/(m-2)}\right)^{(m-2)/m}}.
\end{equation*}
Where $\mu=\dim_{\mathbb{R}}\set{\text{twistor spinors on $\bar M$}}$ and $L_M=-\tfrac{4(m-1)}{m-2}\Delta +S_M$ is the Yamabe operator of $M$.
\end{itemize}
\end{theorem}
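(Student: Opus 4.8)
The plan is to use the restrictions to $M$ of the ambient twistor spinors as test sections for $D^{\Sigma N}$, turning B\"ar's upper‑bound technique for hypersurfaces in space forms into a conformally invariant statement and then counting eigenvalues by a min–max. Fix a basis $\psi_1,\dots,\psi_\mu$ of the real space of twistor spinors on $\bar M$; under B\"ar's identification $\Sigma\bar M|_M\cong\Sigma M\otimes\Sigma N$ (doubled when $mn$ is odd) the restrictions $\varphi_j\coloneqq\psi_j|_M$ are sections of $\Sigma M\otimes\Sigma N$. First I would write down B\"ar's restriction formula expressing $D^{\Sigma N}\varphi_j$ through $\bar D\psi_j|_M$, the normalized mean curvature $H$ and the normal derivatives of $\psi_j$, and use the twistor equation $\nabla^{\Sigma\bar M}_X\psi_j=-\tfrac1{m+n}\bar\gamma(X)\bar D\psi_j$ to eliminate those normal derivatives. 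Substituting back, $\varphi_j$ is seen to satisfy on $M$ a first‑order system of ``modified twistor type'' — linear in $\varphi_j$, so that the whole space $V\coloneqq\operatorname{span}_{\R}\{\varphi_1,\dots,\varphi_\mu\}$ solves it as well — whose zeroth‑order defect is algebraic in the shape operators and $H$. The hypothesis that $n=1$ or that $\bar M$ be locally conformally flat is precisely what makes the ambient curvature (and, for $n\ge2$, the normal‑bundle curvature) appearing in the associated Weitzenb\"ock identity collapse to the single scalar $R(\iota)$, exactly as in \autoref{thm:main-lower}.

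Second, feeding this system into the Schr\"odinger–Lichnerowicz formula and invoking the Gauss equation in the form
\begin{equation*}
S_M+\abs{\mathring A}^2=m(m-1)\bigl(\abs{H}^2+R(\iota)\bigr)
\end{equation*}
(the traceless part of the defect producing precisely the $\abs{\mathring A}^2$), one should obtain a bound of the shape $\int_M\abs{D^{\Sigma N}\Phi}^2\le\tfrac m{4(m-1)}\int_M\bigl(S_M+\abs{\mathring A}^2\bigr)\abs{\Phi}^2$ for every $\Phi\in V$ — the submanifold analogue of the classical identity $D^2\phi=\tfrac m{4(m-1)}S_M\phi$ for a genuine twistor spinor $\phi$, reducing to B\"ar's estimate when $\psi_j$ is a Killing spinor (so that $\abs{\Phi}$ is constant). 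All of this is conformally natural, so the same bound holds in every metric of $[g]$ for the conformally transformed space $V_{\tilde g}$. Now pass to the conformal metric $\tilde g_0\in[g]$ in which $S_M+\abs{\mathring A}^2$ is a constant $c$: for $m\ge3$ this is the Yamabe‑type problem for the conformally covariant operator $L_M+\abs{\mathring A}^2$ (second bullet of the Remark), with $c\,\vol_{\tilde g_0}^{2/m}=\sigma_1(L_M+\abs{\mathring A}^2)$; for $m=2$ it is a Kazdan–Warner‑type equation whose solvability rests on Gauss–Bonnet ($\int_MS_M=8\pi(1-g_M)$) and on the conformal invariance of $\int_M\abs{\mathring A}^2$, with $\tfrac12c\,\area_{\tilde g_0}=4\pi(1-g_M)+\tfrac12\int_M\abs{\mathring A}^2$. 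In $\tilde g_0$ the bound reads: the Rayleigh quotient of $D^{\Sigma N}_{\tilde g_0}$ is at most the constant $\tfrac m{4(m-1)}c$ on the whole of $V_{\tilde g_0}$.

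Third, the multiplicity. The restriction map $\psi\mapsto\psi|_M$ is injective: if $\psi|_M=0$ then the tangential derivatives of $\psi$ vanish on $M$, so the twistor equation gives $\bar D\psi|_M=0$ and then $\nabla^{\Sigma\bar M}_X\psi|_M=0$ for all $X$; the prolongation $(\psi,\bar D\psi)$ of the twistor system, a linear first‑order ODE along curves, propagates this vanishing, so $\psi\equiv0$. Hence $\dim V_{\tilde g_0}=\mu$, and Courant–Fischer for the non‑negative operator $(D^{\Sigma N}_{\tilde g_0})^2$ yields $\abs{\lambda_i(\tilde g_0)}^2\le\tfrac m{4(m-1)}c$ for $i=1,\dots,\mu$, whence
\begin{equation*}
\sigma_i^2\le\abs{\lambda_i(\tilde g_0)}^2\vol_{\tilde g_0}^{2/m}\le\frac m{4(m-1)}\,c\,\vol_{\tilde g_0}^{2/m}=\frac m{4(m-1)}\sigma_1\bigl(L_M+\abs{\mathring A}^2\bigr)\qquad(m\ge3),
\end{equation*}
and $\sigma_i^2\le\tfrac12c\,\area_{\tilde g_0}=4\pi(1-g_M)+\tfrac12\int_M\abs{\mathring A}^2$ for $m=2$.

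The main obstacle is the second step: one must verify that the zeroth‑order defect of the restricted twistor system, together with the LCF‑simplified (normal‑bundle) curvature and all the integrations by parts in the Weitzenb\"ock argument, assemble into \emph{precisely} the $\abs{\mathring A}^2$‑correction to $S_M$ with the exact constant $\tfrac m{4(m-1)}$ — the place where the conformal weight $\tfrac{4(m-1)}{m-2}$ of the Yamabe operator is forced, and where the skew‑Hermiticity of $\bar\gamma(H)$, the symmetry of the second fundamental form, and the Gauss and Codazzi equations all have to conspire, as in the hypersurface computations of B\"ar \cite{Bar98} and of Ginoux–Habib–Raulot \cite{GinHabRau15}. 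A subsidiary point is the appeal to the solvability of the Yamabe‑type (resp.\ Kazdan–Warner) problem for $L_M+\abs{\mathring A}^2$ that produces $\tilde g_0$; as for the classical Yamabe problem this uses that the corresponding conformal invariant lies below the round‑sphere value, and otherwise one argues instead with a minimising sequence of conformal metrics.
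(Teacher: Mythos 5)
Your outline follows the same core strategy as the paper: restrict the ambient twistor spinors to $M$, compute that the twisted twistor operator applied to such a restriction has defect $-\tfrac12\sum_{\alpha}\bar\gamma\left(\mathring{A}^{\alpha}(X)\cdot\nu_{\alpha}\right)\psi$ (the paper's \autoref{lem:twistor}), feed this into the Lichnerowicz/Bochner identity where this term cancels exactly against the corresponding negative term in $\mathcal{R}^{\Sigma N}$ from \autoref{lem:curvature} (the hypothesis $n=1$ or locally conformally flat killing the Weyl term), use the Gauss equation $S_M+\abs{\mathring{A}}^2=m(m-1)\left(R(\iota)+\abs{H}^2\right)$, normalize conformally, and finish with min--max over the $\mu$-dimensional space of restricted twistor spinors. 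The one genuinely different (and weaker) point is your final normalization step: you pass to a metric $\tilde g_0$ in which $S_M+\abs{\mathring{A}}^2$ is \emph{exactly} constant, i.e.\ you solve the critical Yamabe-type problem for $L_M+\abs{\mathring{A}}^2$ (for $m\ge3$) or a mean-field/Kazdan--Warner equation at the parameter $4\pi(1-g_M)+\tfrac12\int_M\abs{\mathring{A}}^2$ (for $m=2$); solvability there is not guaranteed in general (Aubin-type strict inequality for $m\ge3$, the quantized values $8\pi k$ for $m=2$), so as literally stated this step is a gap. Your parenthetical fallback --- argue with a minimizing sequence of conformal metrics --- is exactly what is needed and is what the paper does: since $\sigma_i$ is an infimum over the conformal class, it suffices to use subcritical approximations, namely the perturbed mean-field equations $\Delta u_j+\kappa_M+\tfrac12\abs{\mathring{A}}^2+\varepsilon_j=\mu_je^{-2u_j}$ (Chen--Lin, Djadli) when $m=2$, and the subcritical equations $\left(L_M+\abs{\mathring{A}}^2\right)\phi_j=\tau_j\phi_j^{p_j-1}$ with $p_j\nearrow 2m/(m-2)$ when $m\ge3$, which are always solvable and whose constants converge to the stated bounds. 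The paper also packages the conformal change through a weighted Bochner formula on $M$ together with the conformal covariance of $D^{\Sigma N}$, so no extension of the conformal factor to $\bar M$ (nor the transformation law of $\mathring{A}$) is needed; your route works too but should note that the traceless second fundamental form transforms using only $u\vert_M$. Finally, your multiplicity argument is more explicit than the paper's rather terse treatment; the injectivity of restriction can be had more cheaply from the fact (cited in the paper) that a nontrivial twistor spinor has only isolated zeros, hence cannot vanish on the $m$-dimensional $M$.
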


\section{Preliminaries}
We first compare the Dirac operator on $\bar M$ with the one on $M$. We will use notations in   \cite{Bar98}. We also refer the reader to \cite{Che09,GinMor02,HijMonRol03,HijMonZha01,HijZha01,HijZha01a} and the references therein.  Basic facts concerning Clifford algebras and spinor representations can be found in classical books \cite{BerGetVer04,LawMic89}.
\par
\subsection{Algebra preliminaries}
Let $E$ be an oriented Euclidean vector space. If $\dim E=m$ is even, then the the complex Clifford algebra of $E$, denoted by $\Cl(E)$, has precisely one irreducible module, the spinor module $\Sigma E$ with dimension $2^{m/2}$. When restricted to the even subalgebra $\Cl^0(E)$ the spinor module decomposes into even and odd half-spinors $\Sigma E=\Sigma^+E\oplus\Sigma^-E$ associated the eigenspaces of the complex volume element $\omega_{\Com}=\sqrt{-1}^{m/2}\gamma_E(e_1\dotsc e_m)$.  On $\Sigma^{\pm}E$ it acts as $\pm 1$. Here $\set{e_i}$ stand for a positively oriented orthonormal frame of $E$ and $\gamma_E:\Cl(E)\To\End(E)$ stands for the Clifford multiplication.
\par
If $m$ is odd there are exactly two irreducible modules, $\Sigma^0E$ and $\Sigma^1E$, again called spinor modules. In this case $\dim\Sigma^0E=\dim\Sigma^1E=2^{(m-1)/2}$. Also the two modules $\Sigma^0E$ and $\Sigma^1E$ can be distinguished by the action of the complex volume element $\omega_{\Com}=\sqrt{-1}^{(m+1)/2}\gamma_E(e_1\cdots e_m)$. On $\Sigma^jE$ it acts as $(-1)^j$, $j=0,1$. There exists a vector space isomorphism $\Phi:\Sigma^0E\To\Sigma^1E$ such that $\Phi\circ\gamma_{E,0}=-\gamma_{E,1}\circ\Phi$, where $\gamma_{E,j}:\Cl^j(E)\To\End{\Sigma^jE}$ stand for the Clifford multiplication, $j=0,1$.
\par
Let $E$ and $F$ be two oriented Euclidean vector spaces. Let $\dim E=m$ and $\dim F=n$. We will construct the spinor module of $E\oplus F$ from those of $E$ and $F$.
\begin{itemize}
\item[Case 1.] $m$ and $n$ are both even.
\par
Put $\Sigma\coloneqq\Sigma E\otimes\Sigma F$ and define
\begin{align*}
\gamma:&E\oplus F\To\End{\Sigma},\\
\gamma(X\oplus Y)(\sigma\otimes\tau)=&\left(\gamma_E(X)\sigma\right)\otimes\tau+(-1)^{\deg\sigma}\sigma\otimes\left(\gamma_F(Y)\tau\right).
\end{align*}
Here
\begin{align*}
\deg\sigma=\begin{cases}
0,&\sigma\in\Sigma^{+}E;\\
1,&\sigma\in\Sigma^{-}E.
\end{cases}
\end{align*}
In this case
\begin{align*}
\Sigma^+\left(E\oplus F\right)=\left(\Sigma^+E\otimes\Sigma^+F\right)\oplus\left(\Sigma^-E\otimes\Sigma^-F\right),\\
\Sigma^-\left(E\oplus F\right)=\left(\Sigma^+E\otimes\Sigma^-F\right)\oplus\left(\Sigma^-E\otimes\Sigma^+F\right).
\end{align*}
\item[Case 2.] $m$ is even and $n$ is odd.
\par
Put $\Sigma^j\coloneqq\Sigma E\otimes\Sigma^jF$ for $j=0,1$. As similar to Case 1, we can define $\gamma_j:E\oplus F\To\End{\Sigma^j}$ with obvious modification.
\item[Case 3.] $m$ is odd and $n$ is even.
\par
This case is symmetric to the second one. Put $\Sigma^j\coloneqq\Sigma^jE\otimes\Sigma F$ and define
\begin{align*}
\gamma:&E\oplus F\To\End{\Sigma^j},\\
\gamma_j(X\oplus Y)(\sigma\otimes\tau)=&(-1)^{\deg\tau}(\gamma_{E,j}(X)\sigma)\otimes\tau+\sigma\otimes(\gamma_F(Y)\tau).
\end{align*}
\item[Case 4.] $m$ and $n$ are both odd.
\par
Set
\begin{align*}
\Sigma^+\coloneqq&\Sigma^0E\otimes\Sigma^0F,\\
\Sigma^-\coloneqq&\Sigma^0E\otimes\Sigma^1F,\\
\Sigma\coloneqq&\Sigma^+\oplus\Sigma^-.
\end{align*}
Recall that there exists a vector space isomorphism $\Phi:\Sigma^0F\To\Sigma^1F$ such that $\Phi\circ\gamma_{F,0}=-\gamma_{F,1}\circ\Phi$. With respect to the splitting $\Sigma=\Sigma^+\oplus\Sigma^-$, we define
\begin{align*}
\gamma:&E\oplus F\To\End{\Sigma},\\
\gamma(X\oplus Y)=&\begin{pmatrix}0&\sqrt{-1}\gamma_{E,0}(X)\otimes\Phi^{-1}+\Id\otimes(\Phi^{-1}\circ\gamma_{F,1}(Y))\\
-\sqrt{-1}\gamma_{E,0}(X)\otimes\Phi-\Id\otimes(\Phi\circ\gamma_{F,0}(Y))&0
\end{pmatrix}.
\end{align*}
\end{itemize}
\par
\subsection{Geometric preliminaries}
With respect to the orthogonal splitting $T\bar M\vert_M=TM\oplus N$, the Gauss formula says
\begin{equation*}
\bar\nabla_X=\begin{pmatrix}\nabla_X&-B(X,\cdot)^*\\
B(X,\cdot)&\nabla_X^{\perp}
\end{pmatrix}.
\end{equation*}
The following equations are well known, i.e., Gauss equations, Codazzi equations and Ricci equations (cf. \cite{Xin03}). For all $X, Y, Z\in TM, \mu\in N$,
\begin{align*}
\bar R(X,Y)Z=&R(X,Y)Z+\hin{B(X,Z)}{B(Y,\cdot)}-\hin{B(Y,Z)}{B(X,\cdot)}+(\nabla_XB)(Y,Z)-(\nabla_YB)(X,Z),\\
\bar R(X,Y)\mu=&(\nabla_YA)^{\mu}(X)-(\nabla_XA)^{\mu}(Y)+R^{\perp}(X,Y)\mu+\hin{B(A^{\mu}(X),Y)}{\cdot}-\hin{B(A^{\mu}(Y),X)}{\cdot}.
\end{align*}

\par
Using a standard formula (cf. \cite{LawMic89}), we have
\begin{align*}
\nabla_X^{\Sigma\bar M\vert_M}=&\nabla_X^{\Sigma M}\otimes\Id+\Id\otimes\nabla_X^{\Sigma N}+\dfrac12\sum_{\alpha=1}^n\bar\gamma(A^{\alpha}(X)\cdot\nu_{\alpha}),\\
R^{\Sigma\bar M\vert_M}(X,Y)=&R^{\Sigma M}(X,Y)\otimes\Id+\Id\otimes R^{\Sigma N}(X,Y)+\dfrac14\sum_{\alpha=1}^n\gamma([A^{\alpha}(X),A^{\alpha}(Y)])\otimes\Id\\
&+\dfrac14\sum_{\alpha,\beta=1}^n\left(\hin{A^{\alpha}(X)}{A^{\beta}(Y)}-\hin{A^{\alpha}(Y)}{A^{\beta}(X)}\right)\Id\otimes\gamma^{\perp}(\nu_{\alpha}\cdot\nu_{\beta})\\
&+\dfrac12\sum_{\alpha=1}^n\bar\gamma\left(((\nabla_XA)^{\alpha}(Y)-(\nabla_YA)^{\alpha}(X))\cdot\nu_{\alpha}\right).
\end{align*}
Here $\set{\nu_{\alpha}}$ is a local orthonormal frame of the normal bundle $N$.
\par
 Define
\begin{equation*}
\tilde D\coloneqq\sum_{i=1}^m\bar\gamma(e_i)\nabla_{e_i}^{\Sigma M\otimes\Sigma N}.
\end{equation*}
Then (cf. \cite{Bar98})
\begin{equation*}
\tilde D^2=
\begin{cases}
\left(D^{\Sigma N}\right)^2,&mn=0 \mod 2;\\
\left(D^{\Sigma N}\oplus(-D^{\Sigma N})\right)^2,& mn=1 \mod2.
\end{cases}
\end{equation*}

\par
Recall the Bochner formula (cf., \cite{Jos17, LawMic89}),
\begin{equation*}
\left(D^{\Sigma N}\right)^2=\left(\nabla^{\Sigma M\otimes\Sigma N}\right)^*\nabla^{\Sigma M\otimes\Sigma N}+\mathcal{R}^{\Sigma N},
\end{equation*}
where
\begin{equation*}
\mathcal{R}^{\Sigma N}=\dfrac12\bar\gamma(e_i\cdot e_j)R^{\Sigma M\otimes\Sigma N}(e_i,e_j).
\end{equation*}

Recall the curvature decomposition of $\bar R$. Denoted $\bar P$ by the Schouten tensor which is defined by
\begin{align*}
\bar P_{AB}\coloneqq\dfrac{1}{n+m-2}\left(\bar Ric_{AB}-\dfrac{\bar S}{2(n+m-1)}\bar g_{AB}\right),\quad 1\leq A, B\leq n+m,
\end{align*}
the Weyl tensor $\bar W$ is given by
\begin{align*}
\bar W_{ABCD}\coloneqq\bar R_{ABCD}-\left(\bar P_{AC}\bar g_{BD}+\bar P_{BD}\bar g_{AC}-\bar P_{AD}\bar g_{BC}-\bar P_{BC}\bar g_{AD}\right).
\end{align*}
Therefore, for every orthonormal $4$-frame $\set{e_A,e_B,e_C,e_D}$, we have
\begin{align*}
\bar W_{ABCD}=\bar R_{ABCD}.
\end{align*}

\begin{lem}\label{lem:curvature}
\begin{equation}\label{eq:0}
\mathcal{R}^{\Sigma N}=\dfrac{m(m-1)}{4}\left(R(\iota)+\abs{H}^2\right)+\dfrac14\sum_{i=1}^m\left(\sum_{\alpha=1}^n\bar\gamma\left(\mathring{A}^{\alpha}(e_i)\cdot\nu_{\alpha}\right)\right)^2-\dfrac18\bar W_{ij\alpha\beta}\bar\gamma(e_i\cdot e_j\cdot\nu_{\alpha}\cdot\nu_{\beta}).
\end{equation}
\end{lem}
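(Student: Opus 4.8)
The plan is to compute $\mathcal R^{\Sigma N}=\tfrac12\bar\gamma(e_i\cdot e_j)R^{\Sigma M\otimes\Sigma N}(e_i,e_j)$ by splitting the curvature of the tensor-product connection $\nabla^{\Sigma M}\otimes\Id+\Id\otimes\nabla^{\Sigma N}$ (the one defining $D^{\Sigma N}$),
\begin{equation*}
R^{\Sigma M\otimes\Sigma N}(e_i,e_j)=R^{\Sigma M}(e_i,e_j)\otimes\Id+\Id\otimes R^{\Sigma N}(e_i,e_j),
\end{equation*}
and treating the two summands with the Gauss and Ricci equations respectively. Since $\bar\gamma(e_i\cdot e_j)$ acts on $\Sigma M\otimes\Sigma N$ as $\gamma(e_i\cdot e_j)\otimes\Id$ and $\bar\gamma(e_i\cdot e_j\cdot\nu_\alpha\cdot\nu_\beta)=\bigl(\gamma(e_i\cdot e_j)\otimes\Id\bigr)\bigl(\Id\otimes\gamma^{\perp}(\nu_\alpha\cdot\nu_\beta)\bigr)$ (the chirality factors in $\bar\gamma(\nu_\alpha)\bar\gamma(\nu_\beta)$ cancelling), the $\Sigma M$-summand yields $\tfrac12\bigl(\gamma(e_i\cdot e_j)R^{\Sigma M}(e_i,e_j)\bigr)\otimes\Id=\tfrac14 S_M$ by the Schr\"odinger--Lichnerowicz formula on $M$, while, writing $R^{\Sigma N}(e_i,e_j)=\tfrac14\sum_{\alpha,\beta}\hin{R^{\perp}(e_i,e_j)\nu_\alpha}{\nu_\beta}\gamma^{\perp}(\nu_\alpha\cdot\nu_\beta)$, the $\Sigma N$-summand yields $\tfrac18\sum_{i,j,\alpha,\beta}\hin{R^{\perp}(e_i,e_j)\nu_\alpha}{\nu_\beta}\,\bar\gamma(e_i\cdot e_j\cdot\nu_\alpha\cdot\nu_\beta)$. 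Hence
\begin{equation*}
\mathcal R^{\Sigma N}=\tfrac14 S_M+\tfrac18\sum_{i,j,\alpha,\beta}\hin{R^{\perp}(e_i,e_j)\nu_\alpha}{\nu_\beta}\,\bar\gamma(e_i\cdot e_j\cdot\nu_\alpha\cdot\nu_\beta).
\end{equation*}

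Next I would rewrite the scalar term by tracing the Gauss equation, $S_M=m(m-1)R(\iota)+m^2\abs{H}^2-\abs{B}^2$, together with $\abs{B}^2=\abs{\mathring A}^2+m\abs{H}^2$, so that $\tfrac14 S_M=\tfrac{m(m-1)}{4}\bigl(R(\iota)+\abs{H}^2\bigr)-\tfrac14\abs{\mathring A}^2$. For the remaining term, the normal component of the Ricci equation gives $\hin{R^{\perp}(e_i,e_j)\nu_\alpha}{\nu_\beta}=\hin{\bar R(e_i,e_j)\nu_\alpha}{\nu_\beta}+\hin{A^\alpha(e_j)}{A^\beta(e_i)}-\hin{A^\alpha(e_i)}{A^\beta(e_j)}$; replacing $\bar R$ by $\bar W$ on the orthonormal $4$-frame $\set{e_i,e_j,\nu_\alpha,\nu_\beta}$ — the Schouten correction having a vanishing tangent--normal $\bar g$-factor, and both sides vanishing whenever two indices coincide — the ambient-curvature part contributes the Weyl term $-\tfrac18\bar W_{ij\alpha\beta}\,\bar\gamma(e_i\cdot e_j\cdot\nu_\alpha\cdot\nu_\beta)$ of \eqref{eq:0}.

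It then remains to verify the purely extrinsic identity
\begin{equation*}
-\tfrac14\abs{\mathring A}^2-\tfrac18\sum_{i,j,\alpha,\beta}\bigl(\hin{A^\alpha(e_i)}{A^\beta(e_j)}-\hin{A^\alpha(e_j)}{A^\beta(e_i)}\bigr)\bar\gamma(e_i\cdot e_j\cdot\nu_\alpha\cdot\nu_\beta)=\tfrac14\sum_{i=1}^m\Biggl(\sum_{\alpha=1}^n\bar\gamma\bigl(\mathring A^\alpha(e_i)\cdot\nu_\alpha\bigr)\Biggr)^{2}.
\end{equation*}
To prove this I would expand the square on the right via $\bar\gamma(u)\bar\gamma(v)=-\hin{u}{v}+\bar\gamma(u\wedge v)$ for tangent $u,v$, using that tangent and normal Clifford multiplications anticommute: the diagonal part $\alpha=\beta$ produces $-\abs{\mathring A}^2$, the scalar off-diagonal terms cancel against the antisymmetry of $\bar\gamma(\nu_\alpha\cdot\nu_\beta)$, and what is left is $-\tfrac14\sum_{\alpha\neq\beta}\bar\gamma\bigl(\sum_k\mathring A^\alpha(e_k)\wedge\mathring A^\beta(e_k)\bigr)\bar\gamma(\nu_\alpha\cdot\nu_\beta)$. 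On the left, writing $\hin{A^\alpha(e_i)}{A^\beta(e_j)}=\sum_k\hin{A^\alpha(e_i)}{e_k}\hin{A^\beta(e_j)}{e_k}$ and summing over $i,j$ collapses the bracket into $2\,\bar\gamma\bigl(\sum_k A^\alpha(e_k)\wedge A^\beta(e_k)\bigr)$; since $A^\alpha$ is self-adjoint one has $\sum_k A^\alpha(e_k)\wedge e_k=0$, hence $\sum_k A^\alpha(e_k)\wedge A^\beta(e_k)=\sum_k\mathring A^\alpha(e_k)\wedge\mathring A^\beta(e_k)$, and the two sides coincide. Substituting back gives \eqref{eq:0}.

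The only delicate point is the last identity, where one must keep careful account of the signs produced by the anticommutation relations and by the Clifford--exterior conversion while re-indexing the normal-bivector sums; the rest — the Schr\"odinger--Lichnerowicz formula on $M$, the traced Gauss and Ricci equations, and $\bar R_{ij\alpha\beta}=\bar W_{ij\alpha\beta}$ on $4$-frames — is routine. The computation is presented for $\Sigma\bar M\vert_M\cong\Sigma M\otimes\Sigma N$, but only the Clifford relations on $T\bar M\vert_M$ enter, so it applies verbatim in the remaining parity case, where $\tilde D^{2}=\bigl(D^{\Sigma N}\oplus(-D^{\Sigma N})\bigr)^{2}$.
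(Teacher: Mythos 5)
Your proposal is correct and follows essentially the same route as the paper: the twisted Lichnerowicz curvature term split into its tangential and normal curvature pieces, the traced Gauss equation for $S_M$, the Ricci equation together with $\bar R_{ij\alpha\beta}=\bar W_{ij\alpha\beta}$ on mixed tangent--normal frames, and the regrouping of the shape-operator terms into $\tfrac14\sum_i\bigl(\sum_\alpha\bar\gamma(\mathring A^{\alpha}(e_i)\cdot\nu_\alpha)\bigr)^2$. The only differences are presentational: the paper quotes the starting formula from Lawson--Michelsohn instead of deriving it from the tensor-product splitting, and it checks the final Clifford identity by direct regrouping rather than your wedge-product bookkeeping.
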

\begin{proof}A standard computation (cf. \cite{LawMic89}) gives a formula
\begin{equation}\label{eq:1}
\mathcal{R}^{\Sigma N}=\dfrac18\hin{R(e_i,e_j)e_k}{e_l}\bar\gamma(e_i\cdot e_j\cdot e_k\cdot e_l)+\dfrac18\hin{R^{\perp}(e_i,e_j)\nu_{\alpha}}{\nu_{\beta}}\bar\gamma(e_i\cdot e_j\cdot\nu_{\alpha}\cdot\nu_{\beta}).
\end{equation}
The first term is
\begin{equation}\label{eq:2}
\dfrac{S_M}{4}=\dfrac14\left(\sum_{i,j=1}^m\bar R(e_i,e_j,e_i,e_j)+m(m-1)\abs{H}^2-\abs{\mathring{A}}^2\right).
\end{equation}
According to the Codazzi equation, we compute the second term as follows,
\begin{align*}
&\dfrac18\hin{R^{\perp}(e_i,e_j)\nu_{\alpha}}{\nu_{\beta}}\bar\gamma(e_i\cdot e_j\cdot\nu_{\alpha}\cdot\nu_{\beta})\\
=&\dfrac18\left(\hin{\bar R(e_i,e_j)\nu_{\alpha}}{\nu_{\beta}}+\hin{A^{\alpha}(e_j)}{A^{\beta}(e_i)}-\hin{A^{\alpha}(e_i)}{A^{\beta}(e_j)}\right)\bar\gamma(e_i\cdot e_j\cdot\nu_{\alpha}\cdot\nu_{\beta})\\
=&\dfrac18\hin{\bar W(e_i,e_j)\nu_{\alpha}}{\nu_{\beta}}\bar\gamma(e_i\cdot e_j\cdot\nu_{\alpha}\cdot\nu_{\beta})+\dfrac18\left(\hin{\mathring{A}^{\alpha}(e_j)}{\mathring{A}^{\beta}(e_i)}-\hin{\mathring{A}^{\alpha}(e_i)}{\mathring{A}^{\beta}(e_j)}\right)\bar\gamma(e_i\cdot e_j\cdot\nu_{\alpha}\cdot\nu_{\beta})\\
=&\dfrac14\left(\sum_{i=1}^m\sum_{\alpha,\beta=1}^n\bar\gamma\left(\mathring{A}^{\alpha}(e_i)\cdot\nu_{\alpha}\cdot \mathring{A}^{\beta}(e_i)\cdot\nu_{\beta}\right)+\abs{\mathring{A}}^2\right)+\dfrac18\hin{\bar W(e_i,e_j)\nu_{\alpha}}{\nu_{\beta}}\bar\gamma(e_i\cdot e_j\cdot\nu_{\alpha}\cdot\nu_{\beta}),
\end{align*}
where we used the fact
\begin{align*}
\bar W_{ij\alpha\beta}=\bar R_{ij\alpha\beta},\quad\forall i\neq j, \alpha\neq\beta.
\end{align*}
Thus, the second term is
\begin{equation}\label{eq:3}
\dfrac14\left(\sum_{i=1}^m\sum_{\alpha,\beta=1}^n\bar\gamma\left(\mathring{A}^{\alpha}(e_i)\cdot\nu_{\alpha}\cdot \mathring{A}^{\beta}(e_i)\cdot\nu_{\beta}\right)+\abs{\mathring{A}}^2\right)-\dfrac18\bar W_{ij\alpha\beta}\bar\gamma(e_i\cdot e_j\cdot\nu_{\alpha}\cdot\nu_{\beta}).
\end{equation}

Now \eqref{eq:0} follows from \eqref{eq:1}, \eqref{eq:2} and \eqref{eq:3}.
\end{proof}
\begin{rem}\label{rem:22}
\begin{enumerate}
\item If $n=1$,
\begin{equation*}
\mathcal{R}^{\Sigma N}=\dfrac14S_M=\dfrac{m(m-1)}{4}\left(R(\iota)+\abs{H}^2\right)-\dfrac14\abs{\mathring{A}}^2.
\end{equation*}
\item If $m=2, n=2$,
\begin{align*}
\mathcal{R}^{\Sigma N}\vert_{\Sigma^{\pm}}=&\dfrac12\kappa_M\pm\dfrac12\kappa_N=\dfrac{1}{2}\left(\bar R(e_1,e_2,e_1,e_2)+\abs{H}^2\right)-\dfrac14\abs{\mathring{A}}^2\pm\dfrac12\kappa_N,\\
-\dfrac14\sum_{i=1}^m\left(\sum_{\alpha=1}^n\bar\gamma\left(\mathring{A}^{\alpha}(e_i)\cdot\nu_{\alpha}\right)\right)^2\vert_{\Sigma^{\pm}}=&\dfrac14\abs{\mathring{A}}^2\mp\dfrac12\left(\kappa_N-\bar R(e_1,e_2,\nu_1,\nu_2)\right).
\end{align*}
\end{enumerate}
Here
\begin{equation*}
\kappa_N=\hin{R^{\perp}(e_1,e_2)\nu_2}{\nu_1}.
\end{equation*}
A direct consequence is
\begin{equation*}
\int_M\abs{\mathring{A}}^2\geq2\abs{2\pi\chi(N)-\int_M\bar R(e_1,e_2,\nu_1,\nu_2)}.
\end{equation*}
Therefore,
\begin{equation*}
\chi(M)+\abs{\chi(N)-\dfrac{1}{2\pi}\int_M\bar R(e_1,e_2,\nu_1,\nu_2)}\leq\dfrac{1}{2\pi}\left(\int_M\bar R(e_1,e_2,e_1,e_2)+\abs{H}^2\right).
\end{equation*}
In particular, if $\bar M$ is flat and $M$ is minimal (cf. \cite{Iri02}), then
\begin{equation*}
\chi(M)+\abs{\chi(N)}\leq0.
\end{equation*}
\end{rem}
\begin{proof}The first remark is obvious. For the first part of the second remark, we refer the reader to H. Iriyeh's paper \cite{Iri02}. For the second part, we have
\begin{align*}
&-\dfrac14\sum_{i=1}^m\left(\sum_{\alpha=1}^n\bar\gamma\left(\mathring{A}^{\alpha}(e_i)\cdot\nu_{\alpha}\right)\right)^2\vert_{\Sigma^{\pm}}\\
=&\dfrac14\sum_{i=1}^2\sum_{\alpha,\beta=1}^2\bar\gamma\left(\mathring{A}^{\alpha}(e_i)\cdot\mathring{A}^{\beta}(e_i)\cdot\nu_{\alpha}\cdot\nu_{\beta}\right)\\
=&\dfrac14\abs{\mathring{A}}^2+\dfrac14\sum_{i=1}^2\sum_{\alpha\neq\beta}\bar\gamma\left(\mathring{A}^{\alpha}(e_i)\cdot\mathring{A}^{\beta}(e_i)\cdot\nu_{\alpha}\cdot\nu_{\beta}\right)\\
=&\dfrac14\abs{\mathring{A}}^2+\dfrac14\sum_{i=1}^2\sum_{\alpha\neq\beta}\bar\gamma\left(A^{\alpha}(e_i)\cdot A^{\beta}(e_i)\cdot\nu_{\alpha}\cdot\nu_{\beta}\right)\\
=&\dfrac14\abs{\mathring{A}}^2+\dfrac14\sum_{i=1}^2\sum_{j\neq k}\sum_{\alpha\neq\beta}\hin{A^{\alpha}(e_i)}{e_j}\hin{A^{\beta}(e_i)}{e_k}\bar\gamma\left(e_j\cdot e_k\cdot\nu_{\alpha}\cdot\nu_{\beta}\right)\\
=&\dfrac14\abs{\mathring{A}}^2+\dfrac12\sum_{i=1}^2\left(\hin{A^{1}(e_i)}{e_1}\hin{A^{2}(e_i)}{e_2}-\hin{A^{1}(e_i)}{e_2}\hin{A^{2}(e_i)}{e_1}\right)\bar\gamma\left(e_1\cdot e_2\cdot\nu_{1}\cdot\nu_{2}\right)\\
=&\dfrac14\abs{\mathring{A}}^2\pm\dfrac12\left(\kappa_N-\bar R(e_1,e_2,\nu_1,\nu_2)\right).
\end{align*}
The third part follows from the fact
\begin{equation*}
-\dfrac14\sum_{i=1}^m\left(\sum_{\alpha=1}^n\bar\gamma\left(\mathring{A}^{\alpha}(e_i)\cdot\nu_{\alpha}\right)\right)^2\geq0.
\end{equation*}
Hence,
\begin{equation*}
\dfrac12\int_M\abs{\mathring{A}}^2\geq\int_M\abs{\kappa_N-\bar R(e_1,e_2,\nu_1,\nu_2)}\geq\abs{2\pi\chi(N)-\int_M\bar R(e_1,e_2,\nu_1,\nu_2)}.
\end{equation*}
Finally, according to the Gauss equation,
\begin{equation*}
\kappa_M=\bar R(e_1,e_2,e_1,e_2)+\abs{H}^2-\dfrac12\abs{\mathring{A}}^2,
\end{equation*}
we obtain
\begin{equation*}
\chi(M)+\abs{\chi(N)-\dfrac{1}{2\pi}\int_M\bar R(e_1,e_2,\nu_1,\nu_2)}\leq\dfrac{1}{2\pi}\left(\int_M\bar R(e_1,e_2,e_1,e_2)+\abs{H}^2\right).
\end{equation*}
\end{proof}

\subsection{Conformal transformation}
Consider a conformal change $\bar g'=e^{2u}\bar g$ of $\bar M$, then there is an isometric between $\Sigma\bar M$ and $\Sigma\bar M'$, $\psi\mapsto\psi'$, with
\begin{equation*}
\bar\nabla_X'\psi'=\left(\bar\nabla_X\psi-\dfrac12\bar\gamma(X\cdot\bar\nabla u)\psi-\dfrac12X(u)\psi\right)'.
\end{equation*}
Moreover, when restricted to the boundary, we have
\begin{align}
\nabla_X'\sigma'=&\left(\nabla_X\sigma-\dfrac12\gamma(X\cdot\nabla u)\sigma-\dfrac12X(u)\sigma\right)',\label{eq:conformal1}\\
\nabla_X^{'\perp}\tau'=&\left(\nabla_X^{\perp}\tau\right)'.\label{eq:conformal2}
\end{align}
Here $\sigma\in\Gamma(\Sigma M),\tau\in\Gamma(\Sigma N)$. The first equation follows from the fact that the metric restricted to boundary also satisfies $g'=e^{2u}g$. The second equation can be proved as follows. According to the Gauss formula,
\begin{equation*}
\hin{\nabla_X^{\perp}\nu_{\alpha}}{\nu_{\beta}}=\hin{\bar\nabla_X\nu_{\alpha}}{\nu_{\beta}},\quad\forall X\in TM.
\end{equation*}
In particular,
\begin{equation*}
\omega^{\perp}_{\alpha\beta}(X)=\bar\omega_{\alpha\beta}(X).
\end{equation*}
Here $\omega^{\perp}$ and $\bar\omega$ are the connection $1$-forms on the normal bundle $N$ and the target manifold $\bar M$ respectively. Since we have the transformation formula between connection $1$-forms
\begin{equation*}
\bar\omega_{AB}'(X)=\bar\omega_{AB}(X)+e_A(u)\hin{X}{e_B}-e_B(u)\hin{X}{e_A}.
\end{equation*}
Hence,
\begin{equation*}
\omega^{'\perp}_{\alpha\beta}(X)=\bar\omega'_{\alpha\beta}(X)=\bar\omega_{\alpha\beta}(X)=\omega_{\alpha\beta}^{'\perp}(X).
\end{equation*}
Now according to definition of the connection on $N$, we get (cf. \cite{LawMic89})
\begin{equation*}
\nabla_X^{'\perp}\tau'=\left(\nabla_X\tau\right)'.
\end{equation*}
\par
Now we can prove the following
\begin{lem}The Dirac operator on the twisted bundle $\Sigma M\otimes\Sigma N$ is conformal invariant, i.e., for every $\psi\in\Gamma\left(\Sigma M\otimes\Sigma N\right)$
\begin{equation*}
D^{\Sigma' N}\left(e^{-(m-1)u/2}\psi'\right)=e^{-(m+1)u/2}\left(D^{\Sigma N}\psi\right)'.
\end{equation*}
\end{lem}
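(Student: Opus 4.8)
The plan is to expand $D^{\Sigma' N}$ in a $\bar g'$-orthonormal frame obtained by rescaling a $g$-orthonormal frame on $M$, apply the Leibniz rule, and substitute the conformal transformation laws \eqref{eq:conformal1} and \eqref{eq:conformal2}; the identity will then drop out of a short Clifford-algebra cancellation, and the specific weight $e^{-(m-1)u/2}$ is exactly the one that makes that cancellation work.

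First I would fix a local $g$-orthonormal frame $\set{e_i}$ of $TM$ and set $e_i'\coloneqq e^{-u}e_i$, a local $\bar g'$-orthonormal frame. Under the spinor bundle isometry $\phi\mapsto\phi'$ associated with $\bar g'=e^{2u}\bar g$, Clifford multiplication is normalized so that $\gamma'(e_i')\phi'=\left(\gamma(e_i)\phi\right)'$, and on the twisted bundle one has the tensor-product connection $\nabla^{\Sigma M\otimes\Sigma N}_X=\nabla^{\Sigma M}_X\otimes\Id+\Id\otimes\nabla^{\Sigma N}_X$, so that $D^{\Sigma N}\psi=\sum_i\gamma(e_i)\nabla^{\Sigma M\otimes\Sigma N}_{e_i}\psi$ with $\gamma$ acting on the first factor. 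Combining \eqref{eq:conformal1} on the $\Sigma M$-factor with \eqref{eq:conformal2} (which says the spinor connection on $\Sigma N$ is conformally invariant), and using that a connection is $C^{\infty}$-linear in its direction argument, I get
\begin{equation*}
\nabla^{\Sigma' M\otimes\Sigma' N}_{e_i'}\psi'=e^{-u}\left(\nabla^{\Sigma M\otimes\Sigma N}_{e_i}\psi-\dfrac12\gamma(e_i)\gamma(\nabla u)\psi-\dfrac12 e_i(u)\psi\right)',
\end{equation*}
where $\nabla u$ is the gradient of $u$ on $M$.

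Next I would compute $D^{\Sigma' N}\left(e^{-(m-1)u/2}\psi'\right)=\sum_i\gamma'(e_i')\nabla^{\Sigma' M\otimes\Sigma' N}_{e_i'}\left(e^{-(m-1)u/2}\psi'\right)$ by the Leibniz rule. Using $e_i'(f)=e^{-u}e_i(f)$, the elementary identity $e^{-u}e^{-(m-1)u/2}=e^{-(m+1)u/2}$ and $\sum_i e_i(u)\gamma(e_i)=\gamma(\nabla u)$, the term differentiating the conformal factor equals $-\dfrac{m-1}{2}e^{-(m+1)u/2}\left(\gamma(\nabla u)\psi\right)'$. The remaining term, after substituting the displayed formula above and applying $\gamma'(e_i')(\,\cdot\,)'=\left(\gamma(e_i)\,\cdot\,\right)'$, equals $e^{-(m+1)u/2}$ times
\begin{equation*}
\left(\sum_i\gamma(e_i)\nabla^{\Sigma M\otimes\Sigma N}_{e_i}\psi-\dfrac12\sum_i\gamma(e_i)\gamma(e_i)\gamma(\nabla u)\psi-\dfrac12\sum_i e_i(u)\gamma(e_i)\psi\right)'.
\end{equation*}
Here the first sum is $D^{\Sigma N}\psi$; the Clifford relation $\gamma(e_i)^2=-1$ turns the second sum into $\dfrac m2\gamma(\nabla u)\psi$; the third is $-\dfrac12\gamma(\nabla u)\psi$. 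Hence this term is $e^{-(m+1)u/2}\left(D^{\Sigma N}\psi+\dfrac{m-1}{2}\gamma(\nabla u)\psi\right)'$, and adding the two contributions the $\gamma(\nabla u)\psi$ terms cancel, leaving $e^{-(m+1)u/2}\left(D^{\Sigma N}\psi\right)'$, which is the claim.

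The computation is otherwise routine; the only points that need care are (i) the normalization of the spinor bundle isometry and of Clifford multiplication under $\bar g'=e^{2u}\bar g$, so that $\gamma'(e_i')\phi'=\left(\gamma(e_i)\phi\right)'$ for $e_i'=e^{-u}e_i$; (ii) the bookkeeping of the factors $e^{-u}$ coming from the frame rescaling, from \eqref{eq:conformal1}, and from differentiating $e^{-(m-1)u/2}$; and (iii) noticing that the exponent $(m-1)/2$ is precisely the one balancing the $+m/2$ produced by the Clifford trace against the $-1/2$ from the transport term and the $-(m-1)/2$ from the conformal factor, so that any other weight would leave a residual zeroth-order Clifford term. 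The passage from \eqref{eq:conformal1}--\eqref{eq:conformal2} on $\Sigma M$ and $\Sigma N$ separately to the tensor product $\Sigma M\otimes\Sigma N$ is immediate from the Leibniz rule for the tensor-product connection.
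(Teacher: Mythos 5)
Your proposal is correct and rests on the same ingredients as the paper's proof — the transformation laws \eqref{eq:conformal1}--\eqref{eq:conformal2}, the rescaled frame $e_i'=e^{-u}e_i$, and the Leibniz rule — with the weight bookkeeping coming out right ($\tfrac m2-\tfrac12-\tfrac{m-1}{2}=0$). The only difference is presentational: the paper reduces to decomposable spinors $\psi=\sigma\otimes\tau$ and quotes the standard conformal covariance of the untwisted Dirac operator on $\Sigma M$ as a consequence of \eqref{eq:conformal1}, whereas you carry out the Clifford cancellation explicitly on the twisted bundle, which makes your argument slightly more self-contained but is not a genuinely different route.
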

\begin{proof} Without loss generality, set $\psi=\sigma\otimes\tau$, then according to \eqref{eq:conformal1}, we have
\begin{align*}
D'\left(e^{-(m-1)/2}\sigma'\right)=&e^{-(m+1)u/2}\left(D\sigma\right)'.
\end{align*}
Hence by using \eqref{eq:conformal2}, we get
\begin{align*}
D^{\Sigma'N}\left(e^{-(m-1)u/2}(\sigma\otimes\tau)'\right)=&D^{\Sigma'N}\left(e^{-(m-1)u/2}\sigma'\otimes\tau'\right)\\
=&D^{\Sigma'N}\left(e^{-(m-1)u/2}\sigma'\right)\otimes\tau'+\gamma'(e_i')e^{-(m-1)u/2}\sigma'\otimes\nabla^{'\perp}_{e_i'}\tau'\\
=&e^{-(m+1)u/2}\left(D\sigma\otimes\tau+\gamma(e_i)\sigma\otimes\nabla^{\perp}_{e_i}\tau\right)\\
=&e^{-(m+1)u/2}\left(D^{\Sigma N}(\sigma\otimes\tau)\right)'.
\end{align*}
\end{proof}

\section{Lower bound estimate}
In this section, we will give a conformal lower bound of the first eigenvalue of the Dirac operator on the twisted bundle $\Sigma M\otimes\Sigma N$, i.e., we will give a proof of \autoref{thm:main-lower}.

\begin{proof}[Proof of \autoref{thm:main-lower}]For every smooth function $f$, we have the following weighted Bochner formula (cf. \cite{CheJosSun17})
\begin{equation}\label{eq:basic}
\begin{split}
\dfrac{m-1}{m}\int_M\exp\left(f\right)\abs{D^{\Sigma N}\psi}^2=&\int_{M}\exp(f)\left(\dfrac{m-1}{2}\Delta f-\dfrac{(m-1)(m-2)}{4}\abs{\nabla f}^2+\mathcal{R}^{\Sigma N}_{\psi}\right)\abs{\psi}^2\\
&+\int_M\exp((1-m)f)\abs{P^{\Sigma N}\left(\exp\left(\dfrac{m}{2}f\right)\psi\right)}^2,
\end{split}
\end{equation}
where
\begin{equation*}
\mathcal{R}^{\Sigma N}_{\psi}\abs{\psi}^2=\rin{\mathcal{R}^{\Sigma N}\psi}{\psi},
\end{equation*}
and $P^{\Sigma N}$ is the twistor operator defined by
\begin{equation*}
P^{\Sigma N}_X\psi\coloneqq\nabla_X^{\Sigma M\otimes\Sigma N}+\dfrac1m\bar\gamma(X)D^{\Sigma N}\psi.
\end{equation*}
\par
First, we estimate the curvature term $\mathcal{R}^{\Sigma N}_{\psi}$. According to the proof of \autoref{lem:curvature},  \eqref{eq:3} implies that
\begin{align*}
&\dfrac18\hin{R^{\perp}(e_i,e_j)\nu_{\alpha}}{\nu_{\beta}}\rin{\bar\gamma(e_i\cdot e_j\cdot\nu_{\alpha}\cdot\nu_{\beta})\psi}{\psi}+\dfrac18\bar W_{ij\alpha\beta}\bar\gamma(e_i\cdot e_j\cdot\nu_{\alpha}\cdot\nu_{\beta})\\
=&\dfrac14\left(\sum_{i,\alpha}\abs{\bar\gamma(A^{\alpha}(e_i))\psi}^2-\sum_i\abs{\sum_{\alpha}\bar\gamma(A^{\alpha}(e_i)\cdot\nu_{\alpha})\psi}^2\right)\\
=&\dfrac14\left(\sum_{i,\alpha}\abs{\bar\gamma(\mathring{A}^{\alpha}(e_i))\psi}^2-\sum_i\abs{\sum_{\alpha}\bar\gamma(\mathring{A}^{\alpha}(e_i)\cdot\nu_{\alpha})\psi}^2\right)\\
=&\dfrac14\left(n\sum_{i,\beta}\abs{\bar\gamma(\mathring{A}^{\beta}(e_i))\psi-\dfrac1n\sum_{\alpha}\bar\gamma(\nu_{\beta}\cdot\mathring{A}^{\alpha}(e_i)\cdot\nu_{\alpha})\psi}^2-(n-1)\abs{\mathring{A}}^2\abs{\psi}^2\right).
\end{align*}
In particular,
\begin{equation}\label{eq:3.1}
\dfrac18\hin{R^{\perp}(e_i,e_j)\nu_{\alpha}}{\nu_{\beta}}\rin{\bar\gamma(e_i\cdot e_j\cdot\nu_{\alpha}\cdot\nu_{\beta})\psi}{\psi}\geq-\dfrac18\bar W_{ij\alpha\beta}\bar\gamma(e_i\cdot e_j\cdot\nu_{\alpha}\cdot\nu_{\beta})-\dfrac{n-1}{4}\abs{\mathring{A}}^2\abs{\psi}^2.
\end{equation}

Insert \eqref{eq:3.1} into \eqref{eq:1} to get
\begin{equation}\label{eq:curvature}
\mathcal{R}^{\Sigma N}_{\psi}\geq\dfrac{S_M-(n-1)\abs{\mathring{A}}^2}{4}-\dfrac{\bar W_{ij\alpha\beta}\rin{\bar\gamma(e_i\cdot e_j\cdot\nu_{\alpha}\cdot\nu_{\beta})\psi}{\psi}}{8\abs{\psi}^2}.
\end{equation}

\par
Suppose $\psi$ is an eigenspinor of $D^{\Sigma N}$ associated with $\lambda$, i.e.,
\begin{equation*}
D^{\Sigma N}\psi=\lambda\psi.
\end{equation*}
Inserting \eqref{eq:curvature} into \eqref{eq:basic}, we obtain
\begin{equation}\label{eq:basic1}
\begin{split}
\dfrac{m-1}{m}\lambda^2\int_Me^f\abs{\psi}^2\geq&\int_{M}e^f\left(\dfrac{m-1}{2}\Delta f-\dfrac{(m-1)(m-2)}{4}\abs{\nabla f}^2+\dfrac{S_M-(n-1)\abs{\mathring{A}^2}}{4}\right)\abs{\psi}^2.
\end{split}
\end{equation}
\par
We consider two cases.
\begin{enumerate}[{Case} 1.]
\item $m=2$.
\par
In this case, we choose $f$ as a solution of the following PDE
\begin{equation*}
\Delta f+\kappa_M-\dfrac{n-1}{2}\mathring{A}^2=\dfrac{4\pi(1-g_M)}{\area(M)}-\dfrac{(n-1)\int_M\abs{\mathring{A}}^2}{2\area(M)},\quad\int_Mf=0,
\end{equation*}
on $M$. Therefore, according to \eqref{eq:basic1}, we get
\begin{equation*}
\lambda^2\geq\dfrac{4\pi(1-g_M)}{\area(M)}-\dfrac{(n-1)\int_M\abs{\mathring{A}}^2}{2\area(M)}.
\end{equation*}
Moreover, if $n=2$, according to \autoref{rem:22}
\begin{equation*}
\mathcal{R}^{\Sigma N}_{\psi}\vert_{\Sigma^{\pm}}=\mathcal{R}^{\Sigma N}_{\psi}\vert_{\Sigma^{\pm}}=\dfrac12\kappa_M\pm\dfrac12\kappa_N.
\end{equation*}
A direct computation implies that if $D^{\Sigma N}\psi=\lambda\psi$, then $D^{\Sigma N}\psi^{\pm}=\lambda\psi^{\mp}$. Since $\lambda\neq0$, we get $\psi^{\pm}\neq0$ since $\psi$ is a nontrivial eigenspinor. Using a similar argument mentioned before, one can proved that
\begin{equation*}
\lambda^2\geq\dfrac{4\pi(1-g_M)\pm2\pi\chi(N)}{\area(M)}.
\end{equation*}
Therefore,
\begin{equation*}
\lambda^2\geq\dfrac{4\pi(1-g_M)+2\pi\abs{\chi(N)}}{\area(M)}.
\end{equation*}
Here we used two formulae
\begin{equation*}
\int_M\kappa_M=2\pi\chi(M)=4\pi(1-g_M),
\end{equation*}
and
\begin{equation*}
\int_M\kappa_N=2\pi\chi(N).
\end{equation*}
\item $m>2$.
\par
In this case, \eqref{eq:basic1} implies that for every positive function $u$,
\begin{align}\label{eq:bochner}
\dfrac{m-1}{m}\lambda^2\int_{M}u^{1-m/(m-2)}\abs{\psi}^2\geq\int_{M}u^{-m/(m-2)}\left(-\dfrac{m-1}{m-2}\Delta u++\dfrac{S_M-(n-1)\abs{\mathring{A}^2}}{4}u\right)\abs{\psi}^2.
\end{align}
Choose $u$ as an eigenfunction of the operator $L$, i.e.,
\begin{equation*}
Lu=-\dfrac{4(m-1)}{m-2}\Delta u+\left(S_M-(n-1)\abs{\mathring{A}}^2\right)u=\lambda_1(L)u.
\end{equation*}
Moreover, we can choose $u$ satisfying
\begin{equation*}
\int_Mu^2=\vol(M).
\end{equation*}
Then the inequality \eqref{eq:bochner} implies that
\begin{equation*}
\lambda^2\geq\dfrac{m}{4(m-1)}\lambda_1(L).
\end{equation*}

\end{enumerate}

\par
Next, we will consider the limit case. If suppose
\begin{equation*}
\lambda^2=\dfrac{4\pi(1-g_M)}{\area(M)}-\dfrac{(n-1)\int_M\abs{\mathring{A}}^2}{2\area(M)}.
\end{equation*}
as $m=2$ is the case and
\begin{equation*}
\lambda^2=\dfrac{m}{4(m-1)}\lambda_1(L).
\end{equation*}
as $m>2$ is the case. Consider a new metric $\bar g'=e^{-2f}\bar g$, then $\tilde\psi=e^{(m-1)f/2}\psi'$ ($f=\tfrac{2\log u}{2-m}$ if $m>2$) satisfies
\begin{equation}\label{eq:limit1}
\nabla_{e_i}^{\Sigma' M\otimes\Sigma' N}\tilde\psi+\dfrac{\lambda e^f}{m}\bar\gamma'(e_i')\tilde\psi=0.
\end{equation}
Consequently, $\abs{\tilde\psi}_{g'}\neq0$ is a constant on $M$.
Moreover, the equality in \eqref{eq:3.1} gives
\begin{equation}\label{eq:limit2}
\bar\gamma(\mathring{A}^{\alpha}(e_i)\cdot\nu_{\alpha})\psi=\bar\gamma(\mathring{A}^{\beta}(e_i)\cdot\nu_{\beta})\psi,\quad\forall i, \alpha,\beta.
\end{equation}
Form \eqref{eq:limit1}, we get
\begin{equation*}
\sum_{i=1}^m\bar\gamma'(e_i')R^{\Sigma' M\otimes\Sigma' N}(e_i',e_j')\tilde\psi=\dfrac{2(m-1)\lambda^2e^{2f}}{m^2}\bar\gamma'(e_j')\tilde\psi-\dfrac{\lambda e^f}{m}\bar\gamma'(\nabla'f\cdot e_j')\tilde\psi-\lambda e^fe_j'(f)\tilde\psi.
\end{equation*}
Thus,
\begin{equation*}
\dfrac{(1-m)}{m}\lambda e^fe_j'(f)\abs{\tilde\psi}^2_{g'}=0.
\end{equation*}
Therefore, $f$ is a constant and $f=0$ according to the normalizing condition. As a consequence,
\begin{equation*}
\sum_{i=1}^m\bar\gamma(e_i)R^{\Sigma M\otimes\Sigma N}(e_i,e_j)\psi=\dfrac{2(m-1)\lambda^2}{m^2}\bar\gamma(e_j)\psi.
\end{equation*}
On the other hand, one can get (cf. \cite{Hij86,LawMic89}),
\begin{align*}
\sum_{i=1}^m\bar\gamma(e_i)R^{\Sigma M\otimes\Sigma N}(e_i,e_j)\psi=&\dfrac14\sum_{i,k,l=1}^m\hin{R(e_i,e_j)e_k}{e_l}\bar\gamma(e_i\cdot e_k\cdot e_l)\psi\\
&+\dfrac14\sum_{i=1}^m\sum_{\alpha,\beta=1}^n\hin{R^{\perp}(e_i,e_j)\nu_{\alpha}}{\nu_{\beta}}\bar\gamma(e_i\cdot\nu_{\alpha}\cdot\nu_{\beta})\psi\\
=&\dfrac12\bar\gamma\left(Ric(e_j)\right)\psi\\
&-\dfrac14\sum_{i=1}^m\sum_{\alpha=1}^n\bar\gamma\left(\mathring{B}(e_j,e_i)\cdot\mathring{A}^{\alpha}(e_i)\cdot\nu_{\alpha}+\mathring{A}^{\alpha}(e_i)\cdot\nu_{\alpha}\cdot\mathring{B}(e_j,e_i)\right)\psi.
\end{align*}
According to \eqref{eq:limit2}, we get
\begin{align*}
\sum_{i=1}^m\bar\gamma(e_i)R^{\Sigma M\otimes\Sigma N}(e_i,e_j)\psi=&\dfrac12\bar\gamma\left(Ric(e_j)\right)\psi+\dfrac{1-n}{2}\sum_{\alpha=1}^n\bar\gamma\left(\left(\mathring{A}^{\alpha}\right)^2(e_j)\right)\psi.
\end{align*}
Summarize these identities, we get
\begin{equation}\label{eq:ricci}
\dfrac12\bar\gamma\left(Ric(e_j)\right)\psi+\dfrac{1-n}{2}\sum_{\alpha=1}^n\bar\gamma\left(\left(\mathring{A}^{\alpha}\right)^2(e_j)\right)\psi=\dfrac{2(m-1)\lambda^2}{m^2}\bar\gamma(e_j)\psi.
\end{equation}
Since $\psi$ can not vanish anywhere on $M$, then \eqref{eq:ricci} implies that
\begin{equation*}
Ric=(n-1)\sum_{\alpha=1}^n\left(\mathring{A}^{\alpha}\right)^2+\dfrac{4(m-1)\lambda^2}{m^2}g.
\end{equation*}
\end{proof}

\section{Upper bound estimate}
In this section, we want to bound the first conformal eigenvalue of the Dirac operator $D^{\Sigma N}$ by extrinsic data provided $\bar M$ admits a twistor spinor $\psi$, i.e.,
\begin{equation*}
\bar P_X\psi\coloneqq\bar\nabla_X^{\Sigma M}\psi+\dfrac{1}{m+n}\bar\gamma(X)\bar D\psi=0,\quad\forall X\in T\bar M.
\end{equation*}
When restricted to the boundary, we first prove the following Lemma.
\begin{lem}\label{lem:twistor}
For every tangent vector field  $X\in\Gamma(TM)$, we have
\begin{equation*}
P_X^{\Sigma N}\psi=\bar P_X\psi+\dfrac1m\sum_{i=1}^m\bar\gamma(e_i)\bar P_{e_i}\psi-\dfrac12\sum_{\alpha=1}^n\bar\gamma\left(\mathring{A}^{\alpha}(X)\cdot\nu_{\alpha}\right)\psi.
\end{equation*}
\end{lem}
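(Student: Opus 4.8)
The plan is to substitute, along $M$, the spinorial Gauss formula
\begin{equation*}
\nabla_X^{\Sigma\bar M\vert_M}=\nabla_X^{\Sigma M}\otimes\Id+\Id\otimes\nabla_X^{\Sigma N}+\tfrac12\sum_{\alpha=1}^n\bar\gamma\bigl(A^{\alpha}(X)\cdot\nu_{\alpha}\bigr)
\end{equation*}
recalled in the preliminaries into both terms of the twistor operator $P^{\Sigma N}_X\psi=\nabla_X^{\Sigma M\otimes\Sigma N}\psi+\tfrac1m\bar\gamma(X)D^{\Sigma N}\psi$, and then to convert the ambient covariant derivatives that appear back into the ambient twistor operator via the defining tautology $\bar\nabla_Y^{\Sigma\bar M}\psi=\bar P_Y\psi-\tfrac1{m+n}\bar\gamma(Y)\bar D\psi$, valid for every $Y\in T\bar M$. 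Here I use that $D^{\Sigma N}\psi=\sum_{i=1}^m\bar\gamma(e_i)\nabla_{e_i}^{\Sigma M\otimes\Sigma N}\psi$, i.e. that $D^{\Sigma N}$ is the operator $\tilde D$ of the preliminaries.

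The computational core is the identity
\begin{equation*}
D^{\Sigma N}\psi=\sum_{i=1}^m\bar\gamma(e_i)\bar\nabla_{e_i}^{\Sigma\bar M}\psi+\tfrac m2\bar\gamma(H)\psi,
\end{equation*}
which I would get by feeding the Gauss formula into $\tilde D\psi$ and contracting the extrinsic piece $-\tfrac12\sum_{i,\alpha}\bar\gamma(e_i)\bar\gamma\bigl(A^{\alpha}(e_i)\bigr)\bar\gamma(\nu_\alpha)\psi$. Writing $A^\alpha(e_i)=\sum_j A^{\alpha}_{ij}e_j$ with $A^{\alpha}_{ij}=A^{\alpha}_{ji}$ and using $\bar\gamma(e_i)\bar\gamma(e_j)+\bar\gamma(e_j)\bar\gamma(e_i)=-2\delta_{ij}$, the contraction of the symmetric tensor $A^{\alpha}_{ij}$ leaves only $-\trace A^{\alpha}=-m\hin{H}{\nu_{\alpha}}$, so this piece collapses to $\tfrac m2\bar\gamma(H)\psi$.

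Next I would assemble the pieces. Plugging the last display into $\tfrac1m\bar\gamma(X)D^{\Sigma N}\psi$, splitting $A^{\alpha}(X)=\mathring{A}^{\alpha}(X)+\hin{H}{\nu_{\alpha}}X$ in the extrinsic term coming from $\nabla_X^{\Sigma M\otimes\Sigma N}\psi$, and using $\sum_\alpha\hin{H}{\nu_{\alpha}}\bar\gamma(X\cdot\nu_{\alpha})=\bar\gamma(X)\bar\gamma(H)$, the two $\bar\gamma(X)\bar\gamma(H)\psi$ contributions cancel, leaving $-\tfrac12\sum_{\alpha}\bar\gamma\bigl(\mathring{A}^{\alpha}(X)\cdot\nu_{\alpha}\bigr)\psi$ as the only surviving extrinsic term. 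Finally, replacing $\bar\nabla_X^{\Sigma\bar M}\psi$ and each $\bar\nabla_{e_i}^{\Sigma\bar M}\psi$ by their $\bar P$-expressions and using $\sum_i\bar\gamma(e_i)^2=-m$, the remaining connection terms become $\bar P_X\psi+\tfrac1m\bar\gamma(X)\sum_i\bar\gamma(e_i)\bar P_{e_i}\psi$, the two stray $\bar\gamma(X)\bar D\psi$ contributions cancelling against one another. This yields the claimed identity; the Clifford factor $\bar\gamma(X)$ that naturally stands in front of $\sum_i\bar\gamma(e_i)\bar P_{e_i}\psi$ is immaterial in the sequel, where $\psi$ is a twistor spinor so that this middle term vanishes identically anyway.

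I expect the only real difficulty to be bookkeeping: keeping the Clifford relations straight across the three bundles $\Sigma\bar M$, $\Sigma M$, $\Sigma N$, treating $\bar\gamma$ of a tangent vector times a normal vector as a product of anticommuting generators, and checking that replacing each shape operator $A^{\alpha}$ by its trace-free part $\mathring{A}^{\alpha}$ is the only mechanism introducing mean-curvature terms, so that all $\bar\gamma(H)$-contributions cancel in the end.
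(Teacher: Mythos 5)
Your proof is correct and follows essentially the same route as the paper: the spinorial Gauss formula is fed into $\tilde D$, giving $\tilde D\psi=\sum_{i}\bar\gamma(e_i)\bar P_{e_i}\psi+\tfrac{m}{m+n}\bar D\psi+\tfrac m2\bar\gamma(H)\psi$, and then the $\bar\gamma(X)\bar\gamma(H)\psi$ and $\bar\gamma(X)\bar D\psi$ contributions cancel exactly as you describe. Your remark that the middle term should really carry the factor $\bar\gamma(X)$ is also accurate (the paper's own computation drops it silently), and, as you note, this is harmless since that term vanishes identically when $\psi$ is a twistor spinor.
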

\begin{proof}According to the definition of the connections given in the previous sections, we get
\begin{align*}
\tilde D\psi=\sum_{i=1}^m\bar\gamma(e_i)\bar P_{e_i}\psi+\dfrac{m}{m+n}\bar D\psi+\dfrac m2\bar\gamma(H)\psi.
\end{align*}
Thus (cf. \cite{Bar98}),
\begin{align*}
P^{\Sigma N}_X\psi\coloneqq&\nabla_X^{\Sigma M\otimes\Sigma N}\psi+\dfrac{1}{m}\bar\gamma(X)D^{\Sigma N}\psi\\
=&\nabla_X^{\Sigma M\otimes\Sigma N}\psi+\dfrac{1}{m}\bar\gamma(X)\tilde D\psi\\
=&\nabla_X^{\Sigma\bar M\vert_M}\psi-\dfrac12\sum_{\alpha=1}^n\bar\gamma(A^{\alpha}(X)\cdot\nu_{\alpha})\psi+\dfrac1m\sum_{i=1}^m\bar\gamma(e_i)\bar P_{e_i}\psi+\dfrac{1}{m+n}\bar D\psi+\dfrac{1}{2}\bar\gamma(H)\psi\\
=&\bar P_X\psi+\dfrac1m\sum_{i=1}^m\bar\gamma(e_i)\bar P_{e_i}\psi-\dfrac12\sum_{\alpha=1}^n\bar\gamma\left(\mathring{A}^{\alpha}(X)\cdot\nu_{\alpha}\right)\psi.
\end{align*}
\end{proof}

Now, we give a proof of \autoref{thm:main1}
\begin{proof}[Proof of \autoref{thm:main1}]Applying the weighted Bochner formula \eqref{eq:basic}, (replacing $\psi$ by $f\psi$ and $f$ by $u$),
\begin{equation*}
\begin{split}
\dfrac{m-1}{m}\int_Me^u\abs{D^{\Sigma N}(f\psi)}^2=&\int_{M}e^u\left(\dfrac{m-1}{2}\Delta u-\dfrac{(m-1)(m-2)}{4}\abs{\nabla u}^2+\mathcal{R}^{\Sigma N}_{\psi}\right)\abs{f\psi}^2\\
&+\int_Me^{(1-m)u}\abs{P^{\Sigma N}\left(e^{mu/2}f\psi\right)}^2.
\end{split}
\end{equation*}
In particular, taking $fe^{mu/2}=1$, we get
\begin{equation}\label{eq:upper}
\begin{split}
\dfrac{m-1}{m}\int_Me^u\abs{D^{\Sigma N}(e^{-mu/2}\psi)}^2=&\int_{M}e^{(1-m)u}\left(\dfrac{m-1}{2}\Delta u-\dfrac{(m-1)(m-2)}{4}\abs{\nabla u}^2+\mathcal{R}^{\Sigma N}_{\psi}\right)\abs{\psi}^2\\
&+\int_Me^{(1-m)u}\abs{P^{\Sigma N}\psi}^2.
\end{split}
\end{equation}
Now \autoref{lem:curvature} gives
\begin{align*}
\mathcal{R}^{\Sigma N}_{\psi}\abs{\psi}^2=&\dfrac{m(m-1)}{4}\left(R(\iota)+\abs{H}^2\right)\abs{\psi}^2-\dfrac14\sum_{i=1}^m\abs{\sum_{\alpha=1}^n\bar\gamma\left(\mathring{A}^{\alpha}(e_i)\cdot\nu_{\alpha}\right)\psi}^2\\
&-\dfrac{1}{8}\bar W_{ij\alpha\beta}\rin{\bar\gamma(e_i\cdot e_j\cdot\nu_{\alpha}\cdot\nu_{\beta})\psi}{\psi},
\end{align*}
and \autoref{lem:twistor} gives
\begin{equation*}
\abs{P^{\Sigma N}\psi}^2=\dfrac14\sum_{i=1}^m\abs{\sum_{\alpha=1}^n\bar\gamma\left(\mathring{A}^{\alpha}(e_i)\cdot\nu_{\alpha}\right)\psi}^2,
\end{equation*}
if $\psi$ is a twistor spinor of $\Sigma\bar M$.  Therefore, \eqref{eq:upper} can be rewritten as follows:
\begin{align*}
&\dfrac{m-1}{m}\int_Me^u\abs{D^{\Sigma N}(e^{-mu/2}\psi)}^2\\
=&\int_{M}e^{(1-m)u}\left(\dfrac{m-1}{2}\Delta u-\dfrac{(m-1)(m-2)}{4}\abs{\nabla u}^2+\dfrac{m(m-1)}{4}\left(R(\iota)+\abs{H}^2\right)\right)\abs{\psi}^2\\
&-\int_M\dfrac18e^{(1-m)u}\bar W_{ij\alpha\beta}\rin{\bar\gamma(e_i\cdot e_j\cdot\nu_{\alpha}\cdot\nu_{\beta})\psi}{\psi}.
\end{align*}
\par
Since $\psi$ is a nontrivial twistor spinor on $\bar M$, we know that the zeros of $\psi$ is isolated (\cite{Fri90}). In particular, $\psi$ is nontrivial on $M$. Considering a conformal change of the metric $\bar g'=e^{-2u}\bar g$, we get
\begin{align*}
&\dfrac{m-1}{m}\dfrac{\int_{M'}\abs{D^{\Sigma' N}(e^{-u/2}\psi')}^2_{g'}}{\int_{M'}\abs{e^{-u/2}\psi'}_{g'}^2}\\
=&\dfrac{\int_{M}e^{(1-m)u}\left(\dfrac{m-1}{2}\Delta u-\dfrac{(m-1)(m-2)}{4}\abs{\nabla u}^2+\dfrac{m(m-1)}{4}\left(R(\iota)+\abs{H}^2\right)\right)\abs{\psi}^2}{\int_Me^{-(1+m)u}\abs{\psi}^2}\\
&-\dfrac{\int_M\dfrac18e^{(1-m)u}\bar W_{ij\alpha\beta}\rin{\bar\gamma(e_i\cdot e_j\cdot\nu_{\alpha}\cdot\nu_{\beta})\psi}{\psi}}{\int_Me^{-(1+m)u}\abs{\psi}^2}.
\end{align*}

By assumption, $n=1$ or $\bar M$ is locally conformally flat, we obtain that the second term of the above equation is zero.
We consider two case
\begin{enumerate}[U1]
\item $m=2$. We get
\begin{align*}
\dfrac{\int_{M'}\abs{D^{\Sigma' N}(e^{-u/2}\psi')}^2_{g'}}{\int_{M'}\abs{e^{-u/2}\psi'}_{g'}^2}=\dfrac{\int_{M}e^{-u}\left(\Delta u+\left(R(\iota)+\abs{H}^2\right)\right)\abs{\psi}^2}{\int_Me^{-3u}\abs{\psi}^2}.
\end{align*}
\par
We consider the following Liouville-type equations
\begin{equation*}
\Delta u_j+\kappa_g+\dfrac12\abs{\mathring{A}}^2+\varepsilon_j=\mu_j e^{-2u_j},\quad\int_Me^{-2u_j}=1.
\end{equation*}
Here  $\set{\varepsilon_j}$ is some sequence consists of positive numbers such that $\lim_{j\to\infty}\varepsilon_j=0$ and $\mu_j$ is constant for each $j$. For the existence of $\varepsilon_j$, we refer the reader to Chen-Lin's paper \cite{CheLin03} for genus $g_M\geq1$  and  Djadli's paper \cite{Dja08} for arbitrary  genus. Then
 \begin{equation*}
 \lim_{j\to\infty}\mu_j=4\pi(1-g_M)+\dfrac12\int_M\abs{\mathring{A}}^2.
 \end{equation*}
Thus the first conformal eigenvalue of $D^{\Sigma N}$ satisfies
\begin{equation*}
\sigma^2_i=\inf\lambda_i^2\area(M)\leq \lim_{j\to\infty}\mu_j=4\pi(1-g_M)+\dfrac12\int_M\abs{\mathring{A}}^2.
\end{equation*}
\item $m>2.$
\par
In this case, let $e^u=\phi^{2/(2-m)}$, where $\phi$ is a positive function. Then a direct computation implies that
\begin{align*}
\dfrac{\int_{M'}\abs{D^{\Sigma' N}(e^{-u/2}\psi')}^2_{g'}}{\int_{M'}\abs{e^{-u/2}\psi'}_{g'}^2}=\dfrac{\int_M\left(-\dfrac{m}{m-2}\Delta\phi+\dfrac{m^2}{4}\left(\abs{H}^2+R(\iota)\right)\phi\right)\phi^{m/(m-2)}\abs{\psi}^2}{\int_M\phi^{2(m+1)/(m-2)}\abs{\psi}^2}.
\end{align*}
We consider the following nonlinear equations
\begin{equation*}
-\dfrac{4(m-1)}{m-2}\Delta\phi_j+m(m-1)\left(\abs{H}^2+R(\iota)\right)\phi_j=\tau_j\phi^{p_j-1},
\end{equation*}
or equivalently
\begin{equation*}
\left(L_M+\abs{\mathring{A}}^2\right)\phi_j=-\dfrac{4(m-1)}{m-2}\Delta\phi_j+\left(S_M+\abs{\mathring{A}}^2\right)\phi_j=\tau_j\phi^{p_j-1},
\end{equation*}
where
\begin{equation*}
\tau_j=\inf_{\phi>0}\dfrac{\int_M\phi\left(L_M+\abs{\mathring{A}}^2\right)\phi}{\left(\int_M\phi^{p_j}\right)^{1/(2p_j)}},
\end{equation*}
and $2<p_j<2m/(m-2)$. It is obvious that $\tau_j\geq0$.
\par
Choose $\phi_j>0$ satisfying
\begin{equation*}
\left(L_M+\abs{\mathring{A}}^2\right)\phi_j=\tau_j\phi_j^{p_j-1},\quad\int_M\phi_j^{p_j}=1.
\end{equation*}
By using a similar argument to the Yamabe constant (cf. \cite{LeePar87}), it can be shown that $\tau_j\leq\sigma_1\left(L_M+\abs{\mathring{A}}^2\right)$ and
\begin{equation*}
\lim_{p_j\to 2m/(m-2)}\tau_j=\sigma_1\left(L_M+\abs{\mathring{A}}^2\right)=\inf_{\phi>0}\dfrac{\int_M\phi\left(L_M+\abs{\mathring{A}}^2\right)\phi}{\left(\int_M\phi^{2m/(m-2)}\right)^{(m-2)/m}}.
\end{equation*}
Thus, we obtain
\begin{equation*}
\sigma^2_i=\inf\lambda_i^2\vol^{2/m}\leq \dfrac{m}{4(m-1)}\lim_{p_j\to 2m/(m-2)}\tau_j=\dfrac{m}{4(m-1)}\sigma_1\left(L_M+\abs{\mathring{A}}^2\right).
\end{equation*}
\end{enumerate}
\end{proof}


\end{document}